\newtheorem{theorem}{Theorem}
\newtheorem{proposition}[theorem]{Proposition}
\newtheorem{example}[theorem]{Example}
\newcommand{\R}{\mathbb{R}}
\newcommand{\eps}{\epsilon}
\newcommand{\EE}[1]{\mathbb{E}\left[{#1}\right]}
\newcommand{\EEst}[2]{\mathbb{E}\left[{#1}\  \middle| \ {#2}\right]}
\newcommand{\PP}[1]{\mathbb{P}\left\{{#1}\right\}}
\newcommand{\PPst}[2]{\mathbb{P}\left\{{#1}\  \middle| \ {#2}\right\}}
\newcommand{\eqd}{\stackrel{\textnormal{d}}{=}}
\newcommand{\iidsim}{\stackrel{\textnormal{iid}}{\sim}}
\newcommand{\Fcal}{\mathcal{F}}
\title{Hoeffding and Bernstein inequalities for weighted sums of exchangeable random variables}
\author{Rina Foygel Barber\thanks{Department of Statistics, University of Chicago}}
\date{\today}
\begin{document}
\maketitle

\begin{abstract}
The aim of this paper is to establish Hoeffding and Bernstein type concentration inequalities
for weighted sums of exchangeable random variables. A special case is the i.i.d.\ setting,
where random variables are sampled independently from some distribution (and are therefore
exchangeable). In contrast to 
the existing literature on this problem, our results provide a natural unified view of both 
the i.i.d.\ and the exchangeable setting.
\end{abstract}

\section{Introduction}\label{sec:intro}

Let $w_1,\hdots,w_n\in\R$ be fixed weights and let $X_1,\hdots,X_n,X_{n+1},\hdots,X_N\in[-1,1]$ be exchangeable random variables.
Our goal is to establish concentration properties for the weighted sum
\[w_1 X_1 + \hdots + w_n X_n.\]
In the special case where the $X_i$'s are i.i.d.\ (a strictly stronger condition),
there are a wide range of well-known concentration bounds for this sum---for example, the Hoeffding and Bernstein inequalities,
as well as multiple more technical results providing tighter bounds.
There are also many recent results handling the exchangeable case---the $X_i$'s are no longer assumed to be independent
but are required to satisfy exchangeability, 
meaning that the joint distribution of $(X_1,\hdots,X_N)$ is equal to that of any permutation, $(X_{\pi(1)},\hdots,X_{\pi(N)})$.
In particular, tight bounds have been established for the special case 
where $w_1 = \hdots = w_n$ (for example,
if $w_i \equiv 1/n$, then the weighted sum can be viewed as a sample mean, for a sample of size $n$ drawn without replacement
from the finite population $X_1,\hdots,X_N$).

However, for the general case where the $w_i$'s may be nonconstant,
a key gap remains in the current literature on this problem, as we now describe.
Since the i.i.d.\ assumption is a special case of exchangeability,
we might expect that classical results for the i.i.d.\ setting (i.e., for the sum $w_1X_1 + \hdots + w_nX_n$,
where $X_i\iidsim P$ for some distribution $P$) can be recovered as a special case
of the concentration bounds for exchangeable sequences.
Indeed, if we take $N\to\infty$, then an infinite exchangeable sequence $X_1,X_2,\hdots$
must be representable as an i.i.d.\ draw from some (random) distribution $P$, by De Finetti's theorem 
\citep{de1929funzione,hewitt1955symmetric}. Even if $N$ is finite, 
there are finite versions of De Finetti's theorem establishing that if $N\gg n$ then the joint distribution 
of an exchangeable sequence $X_1,\hdots,X_n$ can be well approximated as
an i.i.d.\ draw from some (random) $P$---the classical result of this type was established 
by \citet{diaconis1980finite}, proving a bound in the total variation distance, while a more modern information-theoretic version of the finite De Finetti 
 theorem can be found in \citet{gavalakis2021information}, bounding the Kullback--Leibler (KL) divergence.
Consequently, concentration results on the weighted sum $\sum_{i=1}^n w_i X_i$,
under an assumption of exchangeability for $X_1,\hdots,X_N$, should ideally recover
the classical bounds for the i.i.d.\ case when we take $N\to\infty$. But, as we will see below,
this is not the case for existing bounds when the weights $w_1,\hdots,w_n$ are arbitrary.

The primary goal of this paper is to close this gap, by deriving concentration bounds that 
achieve the following aims.
\begin{quote}
\textbf{Main aims: }We seek to establish concentration results for the weighted sum $w_1X_1 + \hdots + w_nX_n$
 under the assumption of exchangeability, such that
for a fixed number of terms $n$ in the sum, the results
\begin{itemize}
\item[(a)] Provide (approximately) tight bounds for fixed $N$; and
\item[(b)] Recover classical bounds for the i.i.d.\ setting, in the limit where $N\to \infty$.
\end{itemize}
Moreover, since the distribution of the weighted sum $\sum_{i=1}^n w_i X_i$
remains the same if we permute the weights,  for interpretability our results should
\begin{itemize}
\item[(c)] Derive bounds that are symmetric in the weights $w_1,\hdots,w_n$.
\end{itemize}
\end{quote}
The results of this paper will establish both Hoeffding and Bernstein type bounds for the weighted sum
in the exchangeable setting, which achieve these aims.
In addition to their theoretical interest, these results
 have the potential to be applicable across a broad range of problems in statistics, 
 since finite exchangeability arises in many contexts in modern statistical analysis, including
 settings such as
permutation testing \citep{hemerik2018exact,ramdas2023permutation}, conformal prediction \citep{vovk2005algorithmic,lei2018distribution,angelopoulos2023conformal},
and inference in regression \citep{candes2018panning,lei2021assumption}.

 \paragraph{Notation.} For any integer $n\geq 1$, $[n]$ denotes the set $\{1,\hdots,n\}$. For a vector $x\in\R^n$, 
 we define $\bar{x} = \frac{1}{n}\sum_{i=1}^n x_i$ as its mean and $\sigma^2_x = \frac{1}{n}\sum_{i=1}^n (x_i - \bar{x})^2$ as its variance.
 For any $i\in[n]$, we will also write $x_{\geq i}$ to denote the subvector $(x_i,\hdots,x_n)\in\R^{n-i+1}$ (with $x_{\geq 1} = x$),
 and $x_{\leq i}$ for the subvector $(x_1,\hdots,x_i)\in\R^i$ (with $x_{\leq n} = x$). 
 Finally, $\eqd$ denotes equality in distribution.

\section{Background}\label{sec:background}
In this section, we review the well-known Hoeffding and Bernstein concentration inequalities for the i.i.d.\ setting,
and some extensions to sampling without replacement.

\subsection{Hoeffding's inequality and Bernstein's inequality for the i.i.d.\ setting}
Consider data points  $X_i\iidsim P$ for some distribution $P$ on $[-1,1]$.
As for the exchangeable case, we will study the distribution of the weighted sum $\sum_{i=1}^n w_i X_i$,
for fixed weights $w_1,\hdots,w_n$.

First we review the classical Hoeffding bound on the weighted sum.
 \begin{theorem}[{\citet{hoeffding1963probability}}]\label{thm:Hoeffding_iid} 
 Let $w_1,\hdots,w_n$ be fixed and let $X_1,X_2,\hdots\iidsim P$, where $P$ is a distribution on $[-1,1]$
 with mean $\mu_P$.
For any $\lambda\in\R$,
\[\EE{\exp\left\{\lambda \sum_{i=1}^n w_i (X_i-\mu_P)\right\}}\leq \exp\left\{\frac{\lambda^2\|w\|^2_2}{2}\right\}.\]
Consequently, for any $\delta\in(0,1)$,
$\PP{\sum_{i=1}^n w_i (X_i-\mu_P) \geq  \|w\|_2 \sqrt{2\log(1/\delta)}} \leq \delta$.
 \end{theorem}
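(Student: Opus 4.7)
The plan is to follow the standard Chernoff-method recipe: factor the moment generating function via independence, bound each univariate factor by Hoeffding's lemma, and then convert to a tail bound via Markov's inequality with an optimized $\lambda$. The i.i.d.\ assumption does essentially all of the work here; indeed, it is exactly the factorization step that will fail in the exchangeable setting that the paper is about to attack.

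Concretely, I would first use independence of the $X_i$'s to write
\[
\EE{\exp\left\{\lambda \sum_{i=1}^n w_i(X_i-\mu_P)\right\}} = \prod_{i=1}^n \EE{\exp\{\lambda w_i (X_i - \mu_P)\}}.
\]
Next, I would invoke Hoeffding's lemma on each factor: since $X_i - \mu_P$ is mean zero and lies in an interval of length at most $2$ (because $X_i \in [-1,1]$), one has $\EE{\exp\{s(X_i-\mu_P)\}} \leq \exp\{s^2/2\}$ for every $s \in \R$. Substituting $s = \lambda w_i$ and multiplying gives the MGF bound $\exp\{\lambda^2 \|w\|_2^2 / 2\}$. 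For the probability statement, I would apply Markov to obtain
\[
\PP{\sum_{i=1}^n w_i(X_i - \mu_P) \geq t} \leq \exp\{-\lambda t + \lambda^2\|w\|_2^2/2\}
\]
for any $\lambda > 0$, optimize at $\lambda = t / \|w\|_2^2$ to get an exponent of $-t^2/(2\|w\|_2^2)$, and finally solve $t = \|w\|_2\sqrt{2\log(1/\delta)}$.

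There is no real obstacle: the only nontrivial ingredient is Hoeffding's lemma itself, which follows from convexity of $s \mapsto e^{sx}$ (bounding it by the chord between the two endpoints of the support) plus a short Taylor expansion of the log of the resulting chord. Everything else is bookkeeping. The value of laying the argument out explicitly is to highlight exactly where independence enters, so that the reader can see which step must be replaced when the $X_i$'s are merely exchangeable.
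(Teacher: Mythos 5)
Your proof is correct and is the standard argument for this classical result, which the paper simply cites from \citet{hoeffding1963probability} without proof: factor the MGF by independence, apply Hoeffding's lemma with $(b-a)^2/8 = 1/2$ for variables in an interval of length $2$, and convert to a tail bound via the Chernoff--Markov optimization (the same conversion the paper records as a standard fact in its appendix). No gaps.
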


Next,  we review Bernstein's inequality, which
can give tighter bounds than the Hoeffding bound in
 settings where the distribution $P$ has low variance.
  \begin{theorem}[{\citet{bernstein1946theory}}]\label{thm:Bernstein_iid}
  Let $w_1,\hdots,w_n$ be fixed and let $X_1,X_2,\hdots\iidsim P$, where $P$ is a distribution on $[-1,1]$
 with mean $\mu_P$ and variance $\sigma^2_P$.
For any $\lambda\in\R$ with $|\lambda|< \frac{3}{2\|w\|_\infty}$,
\[\EE{\exp\left\{\lambda \sum_{i=1}^n w_i (X_i-\mu_P)\right\}}\leq \exp\left\{\frac{\lambda^2}{2(1-\frac{2\|w\|_\infty}{3}|\lambda|)}\cdot \sigma^2_P\|w\|^2_2\right\}.\]
 Consequently, for any $\delta\in(0,1)$,
\[\PP{\sum_{i=1}^n w_i (X_i-\mu_P) \geq \sigma_P\|w\|_2 \sqrt{2\log(1/\delta)} + \frac{2}{3}\|w\|_\infty  \log(1/\delta)}\leq \delta.\]
\end{theorem}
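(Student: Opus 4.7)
The strategy is the standard Chernoff / MGF method. The key intermediate step is a single-variable Bernstein MGF lemma: if $Y$ is zero-mean with $|Y|\leq c$ and $\EE{Y^2}\leq v$, then for every $\lambda$ with $c|\lambda|<3$,
\[
\EE{\exp(\lambda Y)} \leq \exp\!\left(\frac{\lambda^2 v/2}{1-c|\lambda|/3}\right).
\]
To prove this, I would Taylor-expand $\exp(\lambda Y)$, use $\EE{Y}=0$ to kill the $O(\lambda)$ term, and bound higher moments via $|\EE{Y^k}|\leq c^{k-2}\EE{Y^2}\leq c^{k-2}v$ for $k\geq 2$. The resulting tail series $\sum_{k\geq 2} \frac{|\lambda|^k c^{k-2} v}{k!}$ is controlled using the elementary inequality $k!\geq 2\cdot 3^{k-2}$ for $k\geq 2$ (easy induction), yielding a geometric series that sums to $\frac{\lambda^2 v/2}{1-c|\lambda|/3}$. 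The inequality $1+x\leq e^x$ then converts the bound on $\EE{\exp(\lambda Y)}-1$ to exponential form.

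For the theorem itself, set $Y_i = w_i(X_i-\mu_P)$. Because $X_i\in[-1,1]$ and $\mu_P\in[-1,1]$, we have $|Y_i|\leq 2|w_i|\leq 2\|w\|_\infty$ and $\EE{Y_i^2} = w_i^2\sigma_P^2$. Applying the lemma with $c = 2\|w\|_\infty$ and $v = w_i^2\sigma_P^2$ (for each $i$), and using independence to factor the joint MGF, gives
\[
\EE{\exp\!\left(\lambda\sum_{i=1}^n w_i(X_i-\mu_P)\right)} = \prod_{i=1}^n \EE{\exp(\lambda Y_i)} \leq \exp\!\left(\frac{\lambda^2 \sigma_P^2\|w\|_2^2/2}{1-\tfrac{2\|w\|_\infty}{3}|\lambda|}\right),
\]
which is the stated MGF bound. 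Replacing $2|w_i|$ by $2\|w\|_\infty$ in each denominator is what produces the symmetric form with $\|w\|_\infty$ and is the reason for passing from the per-coordinate bound to a uniform one.

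For the tail bound, apply Markov's inequality to $\exp(\lambda S)$, with $S=\sum_i w_i(X_i-\mu_P)$ and $\lambda>0$, giving
\[
\PP{S\geq t} \leq \exp\!\left(\frac{\lambda^2 \sigma_P^2\|w\|_2^2/2}{1-2\|w\|_\infty|\lambda|/3} - \lambda t\right).
\]
The Bernstein choice $\lambda = \frac{t}{\sigma_P^2\|w\|_2^2 + \tfrac{2}{3}\|w\|_\infty t}$ makes $1-\tfrac{2\|w\|_\infty}{3}\lambda = \frac{\sigma_P^2\|w\|_2^2}{\sigma_P^2\|w\|_2^2 + \tfrac{2}{3}\|w\|_\infty t}$, so the exponent collapses to $-\frac{t^2/2}{\sigma_P^2\|w\|_2^2+\tfrac{2}{3}\|w\|_\infty t}$. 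Setting this equal to $-\log(1/\delta)$ produces a quadratic inequality in $t$ which, after applying $\sqrt{a+b}\leq\sqrt{a}+\sqrt{b}$ for $a,b\geq 0$, gives $t = \sigma_P\|w\|_2\sqrt{2\log(1/\delta)} + \tfrac{2}{3}\|w\|_\infty\log(1/\delta)$, as claimed.

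There is no genuine obstacle here; this is textbook material. The only slightly fiddly pieces are the factorial inequality and geometric-series bookkeeping inside the single-variable lemma, and the algebraic inversion that turns the exponential-exponent bound into the closed-form expression for $t(\delta)$.
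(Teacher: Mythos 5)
Your MGF bound is correct, and it uses exactly the ingredients the paper itself relies on elsewhere (the one-variable ``basic Bernstein'' MGF lemma invoked in the proof of Proposition~\ref{prop:Bernstein_martingale}, applied per coordinate with $c=2\|w\|_\infty$ and $v=w_i^2\sigma_P^2$, then factored by independence). The paper does not actually prove Theorem~\ref{thm:Bernstein_iid}---it is quoted as background---so there is no in-paper proof to compare against; your route is the standard one.

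There is, however, a genuine gap in your tail-bound step. Write $v=\sigma_P^2\|w\|_2^2$, $b=\tfrac{2}{3}\|w\|_\infty$, $L=\log(1/\delta)$. Your choice $\lambda=t/(v+bt)$ is \emph{not} the optimizer of $\lambda t-\frac{\lambda^2 v}{2(1-b\lambda)}$; it yields the weaker tail $\PP{S\geq t}\leq\exp\bigl(-\tfrac{t^2}{2(v+bt)}\bigr)$. Inverting that form requires $t\geq bL+\sqrt{b^2L^2+2vL}$, and the claimed threshold $t^*=\sqrt{2vL}+bL$ fails this: one checks $(t^*)^2-2L(v+bt^*)=-b^2L^2<0$, so $\exp\bigl(-\tfrac{(t^*)^2}{2(v+bt^*)}\bigr)>\delta$. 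The inequality $\sqrt{x+y}\leq\sqrt{x}+\sqrt{y}$ bounds the root from \emph{above}, which is the wrong direction here; what your argument actually delivers is $\PP{S\geq \sigma_P\|w\|_2\sqrt{2L}+\tfrac{4}{3}\|w\|_\infty L}\leq\delta$, i.e., a doubled linear term. The stated bound is nonetheless true, but it requires the exact Legendre transform of $\lambda\mapsto\frac{\lambda^2v}{2(1-b\lambda)}$ (the sub-gamma inversion of \citet[Section 2.4]{boucheron2013concentration}, which is precisely the fact~\eqref{eqn:Bernstein_from_MGF_to_tail} the paper invokes). Concretely, take $\lambda=\frac{s}{1+bs}$ with $s=\sqrt{2L/v}$; then $\frac{\lambda^2v}{2(1-b\lambda)}=\frac{L}{1+bs}$ and $\lambda t^*=\frac{L(2+bs)}{1+bs}$, so $\lambda t^*-\frac{\lambda^2v}{2(1-b\lambda)}=L$ exactly and Chernoff gives $\PP{S\geq t^*}\leq e^{-L}=\delta$. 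Replacing your choice of $\lambda$ with this one (or citing the sub-gamma inversion directly) closes the gap.
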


\subsection{Do these bounds extend to exchangeable data?}\label{sec:background_sampling_WOR}
Before turning to our main results, we first pause to ask whether these existing bounds
hold also in the broader setting of exchangeable, rather than i.i.d., data. 
In fact, this question includes a very important special case: the unweighted sum, $\sum_{i=1}^n X_i$,
or equivalently, weights $w_1 = \hdots = w_n=1$. 
This problem has a long history in the literature, and is typically studied in the context of sampling without replacement
from a finite population of size $N$. In fact, sampling without replacement has \emph{stronger} concentration than an i.i.d.\ 
sample---for instance, Serfling's inequality \citep{serfling1974probability} establishes that
\begin{equation}\label{eqn:Serfling_WOR}\EE{\exp\left\{\lambda \sum_{i=1}^n (X_i-\bar{X})\right\}}\leq \exp\left\{\frac{n\lambda^2}{2} \cdot \left(1-\frac{n-1}{N}\right)\right\},\end{equation}
which is strictly stronger than Theorem~\ref{thm:Hoeffding_iid}'s Hoeffding bound for the i.i.d.\ case.
More generally, for any exchangeable $X_1,\hdots,X_N$, the Hoeffding and Bernstein bounds for the i.i.d.\ case still hold for the unweighted sum $\sum_{i=1}^n X_i$ \cite[Theorem 4]{hoeffding1963probability} (again with concentration around $\bar{X}$). See also \cite{ramdas2023randomized} for additional concentration inequalities for sums of exchangeable random variables

However, in the setting of general weights, this is no longer the case:
as we will see in the following example, the bounds for the i.i.d.\ case
may no longer hold. 
\begin{example}\label{example:n=2}
Let $N=n=2$, and let $w_1=1$, $w_2 = -1$. If $X_1,X_2$ are sampled i.i.d.\ from the uniform distribution on $\{1,-1\}$,
then  by the Hoeffding bound (Theorem~\ref{thm:Hoeffding_iid}),
 \[\EE{e^{\lambda \sum_{i=1}^2 w_i X_i}} \leq e^{\frac{\lambda^2\|w\|^2_2}{2}}.\]
If instead $X_1,X_2$ are sampled uniformly without replacement from $\{1,-1\}$, then 
 \[\EE{e^{\lambda \sum_{i=1}^2 w_i X_i}} = \EE{e^{\lambda(X_1 - X_2)}} =  \frac{e^{2\lambda} + e^{-2\lambda}}{2} \approx e^{2\lambda^2}  = e^{2 \cdot \frac{\lambda^2\|w\|^2_2}{2}},\]
 where the approximation holds for $\lambda\approx 0$. 
\end{example}

To summarize, we have seen that the unweighted case and the weighted case are fundamentally quite different.
 In the unweighted setting,  the sum $\sum_{i=1}^n X_i$
 concentrates more strongly
 than in the i.i.d.\ case, and the main focus of the existing literature on this problem
 is to establish bounds for sampling without replacement that are \emph{tighter} than the i.i.d.\ bounds (such as Serfling's inequality~\eqref{eqn:Serfling_WOR};
 see also \cite[Corollary 3.1]{waudby2020confidence} for another example of a
 bound that is tighter than the i.i.d.\ bound).
  In contrast, in the weighted setting where the $w_i$'s are arbitrary,  Example~\ref{example:n=2}
demonstrates that if the $X_i$'s are exchangeable rather than i.i.d.\ then
 the sum $\sum_{i=1}^n w_i X_i$ may 
not satisfy the i.i.d.\ bounds.
Our goal is therefore to derive bounds that 
 are \emph{as close as possible} to the i.i.d.\ bounds.

 \section{Main results}\label{sec:main_results}
We now present our new bounds for the case of exchangeable data:
a Hoeffding bound in Section~\ref{sec:main_results_Hoeffding}, and a Bernstein
bound in Section~\ref{sec:main_results_Bernstein}. We will then see in Section~\ref{sec:main_results_exch_to_iid}
how these results lead to the classical i.i.d.\ bounds as a special case.

 \subsection{A Hoeffding bound for the exchangeable case}\label{sec:main_results_Hoeffding}
 Our first result is a Hoeffding type bound. First, for any $N\geq 2$ we define
 \[\eps_N = \frac{H_N-1}{N-H_N}\textnormal{ where $H_N = 1 + \frac{1}{2} + \hdots + \frac{1}{N}$}.\]
 Since the harmonic series scales as $H_N\asymp \log N$, this means that $\eps_N = \mathcal{O}(\frac{\log N}{N})$.
 
  \begin{theorem}\label{thm:Hoeffding_exch}
 Let $w\in\R^n$ be fixed and let $X_1,\hdots,X_N\in[-1,1]$ be exchangeable, where $N\geq n \geq 2$. Then
 for any $\lambda\in\R$,
 \[\EE{\exp\left\{ \lambda\sum_{i=1}^n w_i (X_i - \bar{X})\right\}}\leq\exp\left\{\frac{\lambda^2}{2}\|w\|^2_2\cdot (1+\eps_N)\right\}.\]
 Consequently, for any $\delta\in(0,1)$,
 $\PP{\sum_{i=1}^n w_i (X_i-\bar{X}) \geq  \|w\|_2  \sqrt{2(1+\eps_N)\cdot \log(1/\delta)}} \leq \delta$.
\end{theorem}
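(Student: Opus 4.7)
My plan is to reduce to a problem about uniform random permutations via exchangeability, then apply a martingale-difference decomposition combined with Hoeffding's lemma.

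First, condition on the unordered multi-set $\{X_1,\ldots,X_N\}$. By exchangeability, the conditional joint distribution of $(X_1,\ldots,X_N)$ is uniform over its permutations, and $\bar X$ becomes deterministic given this $\sigma$-field. So it suffices to bound $\mathbb{E}_\pi[\exp\{\lambda\sum_{i=1}^n w_i(x_{\pi(i)}-\bar x)\}]$ for fixed $x\in[-1,1]^N$ and $\pi$ a uniform permutation of $[N]$. Zero-padding the weight vector to length $N$ and subtracting its mean lets me assume without loss of generality that $\sum_{i=1}^N w_i=0$, since centering can only decrease $\|w\|_2$; and once $\sum_i w_i=0$, the sum $\sum_i w_i x_{\pi(i)}$ is invariant under shifting all entries of $x$ by a constant, so I can also arrange $\bar x=0$.

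Next, reveal $V_i := x_{\pi(i)}$ sequentially and define the martingale differences $D_i := V_i - \mathbb{E}[V_i\mid V_{<i}]$. Since the remaining $N-i+1$ values after step $i-1$ sum to $-S_{i-1}$ (where $S_k:=\sum_{j\le k}V_j$), the conditional mean is $-S_{i-1}/(N-i+1)$; inverting this recursion gives $V_i = D_i-\sum_{k<i}D_k/(N-k)$, and substituting produces $\sum_{i=1}^N w_i V_i = \sum_{k=1}^N a_k D_k$ for deterministic coefficients $a_k$ depending on the partial sums of $w$. Each $D_k$ lies in an interval of length at most $2$ conditional on $\mathcal{F}_{k-1}$, so Hoeffding's lemma together with the tower rule yields $\mathbb{E}[\exp\{\lambda\sum_k a_k D_k\}]\le\exp\{\tfrac{\lambda^2}{2}\sum_k a_k^2\}$. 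Crucially, $\sum_j V_j=0$ forces $D_N\equiv 0$, so only $k<N$ contribute.

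The main obstacle is then to show $\sum_k a_k^2 \le \|w\|_2^2(1+\epsilon_N)$. A direct expansion using $\sum_i w_i=0$ and Abel summation rewrites $\sum_k a_k^2$ as $\|w\|_2^2$ plus a positive contribution involving the harmonic-type sum $\sum_k w_k^2/(N-k)$ minus a negative contribution involving the squared partial sums $(S_k^w)^2$ of the weights in the chosen ordering. Naively dropping the negative term is too lossy; instead, one must exploit the cancellation between these two pieces. The ordering-dependence means that one must either choose the order in which the $V_i$'s are revealed judiciously (for example, to keep the partial sums $S_k^w$ large enough to offset the positive terms), or randomize the order and use a symmetrization argument that leverages the constraint $\sum_i w_i=0$. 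I expect this algebraic/combinatorial step---showing that the negative partial-sum terms precisely trim the harmonic growth $\sum_k 1/(N-k)$ down to the factor $\epsilon_N=(H_N-1)/(N-H_N)$---to be where the hardest work lies.
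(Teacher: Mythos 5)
Your skeleton coincides with the paper's: conditioning on the multiset, zero-padding to length $N$, and your martingale-difference decomposition with Hoeffding's lemma is exactly Proposition~\ref{prop:Hoeffding_martingale} (note $\EEst{X_i}{X_1,\dots,X_{i-1}}=\bar X_{\geq i}$, so your $D_i$ is $X_i-\bar X_{\geq i}$ and your coefficients are $a_k=w_k-\frac{1}{N-k}\sum_{j>k}w_j=(A_Nw)_k$ in the notation of the paper's proof). The genuine gap is the step you flag yourself: you never prove $\sum_k a_k^2\leq \|w\|_2^2(1+\eps_N)$, and neither route you sketch gets there. For an arbitrary ordering the inequality is simply false: with centered $w=(0,\dots,0,1,-1)$ one gets $a_{N-1}=2$ and all other $a_k=0$, so $\sum_k a_k^2=4=2\|w\|_2^2$ while $1+\eps_N\to 1$. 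Symmetrizing over the revelation order (averaging the exponents $\frac{\lambda^2}{2}\|a^{(\tau)}\|_2^2$ over orderings $\tau$ via Jensen) gives the constant $w^\top\bigl(\frac{1}{N!}\sum_\Pi\Pi^\top A_N^\top A_N\Pi\bigr)w=\bigl(1+\frac{H_{N-1}}{N-1}\bigr)\|w\|_2^2$ for centered $w$, since $\mathrm{tr}(A_N^\top A_N)=N-1+H_{N-1}$; one can check that $\frac{H_{N-1}}{N-1}>\eps_N$ for all $N\geq 3$, so this yields a strictly weaker bound than the theorem claims. Whether a judiciously chosen single ordering always achieves $1+\eps_N$ is not established by your argument.

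The device you are missing is that the paper does not feed (a permutation of) $w$ into the martingale bound; it feeds the transformed vector $u=(A_N^\top A_N)^\dagger\Pi w$, so that the linear term becomes $w^\top\Pi^\top(A_N^\top A_N)^\dagger X$ and the quadratic term becomes $w^\top\Pi^\top(A_N^\top A_N)^\dagger\Pi w$, i.e., both are forms in the \emph{pseudoinverse}. Averaging over $\Pi$ then invokes $\frac{1}{N!}\sum_\Pi\Pi^\top(A_N^\top A_N)^\dagger\Pi=\frac{N-H_N}{N-1}\mathcal{P}^\perp_{\mathbf{1}_N}=\frac{1}{1+\eps_N}\mathcal{P}^\perp_{\mathbf{1}_N}$ (the trace of the pseudoinverse is $N-H_N$, rather than the $N-1+H_{N-1}$ obtained from $A_N^\top A_N$ itself), so both terms pick up the same factor $\frac{1}{1+\eps_N}$ and the substitution $\lambda\mapsto\lambda(1+\eps_N)$ delivers the claimed constant. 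Without this (or an equivalent) idea, your argument stalls at a constant strictly worse than $1+\eps_N$.
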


\subsubsection{A special case: nonnegative weights}
Recall from Section~\ref{sec:background_sampling_WOR} that the 
the i.i.d.\ Hoeffding bound (Theorem~\ref{thm:Hoeffding_iid}) holds for the unweighted case, $w_1=\hdots = w_n= 1$,
under the broader assumption of exchangeability.
While Example~\ref{example:n=2} demonstrates that this is no longer true 
 in general for arbitrary weights, we will now see that nonnegative weights avoid this issue.
   \begin{theorem}\label{thm:Hoeffding_exch_nonnegative}
 Let $w\in\R^n$ be fixed and let $X_1,\hdots,X_n\in[-1,1]$ be exchangeable. If $w_i\geq 0$ for all $i$, then
 for any $\lambda\in\R$, 
 \[\EE{\exp\left\{ \lambda\sum_{i=1}^n w_i (X_i - \bar{X})\right\}}\leq\exp\left\{\frac{\lambda^2}{2}\|w\|^2_2\right\}.\]
 Consequently, for any $\delta\in(0,1)$,
  $\PP{\sum_{i=1}^n w_i (X_i-\bar{X}) \geq  \|w\|_2  \sqrt{2\log(1/\delta)}} \leq \delta$.
\end{theorem}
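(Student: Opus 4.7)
The plan is to condition on the order statistics of $X$, which reduces the problem to bounding the MGF of a linear function of a uniform random permutation $\pi$, and then to analyze that permutation MGF via a Doob martingale along a sequential revelation of $\pi$.

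First, by exchangeability of $(X_1,\dots,X_n)$ the left-hand side is invariant under relabeling of the weights, so it depends on $w$ only through its multiset; I would thus assume without loss of generality that $w_1 \geq w_2 \geq \cdots \geq w_n \geq 0$. Conditioning on the vector $V = (V_1 \leq \cdots \leq V_n)$ of order statistics, exchangeability gives $(X_1,\dots,X_n) \eqd (V_{\pi(1)},\dots,V_{\pi(n)})$ for a uniform random permutation $\pi$ of $[n]$ independent of $V$, and $\bar X = \bar V$ is determined by $V$. It therefore suffices to prove the following pointwise statement: for every $v \in [-1,1]^n$,
\[\EE{\exp\!\left\{\lambda\sum_{i=1}^n w_i(v_{\pi(i)} - \bar v)\right\}} \leq \exp\!\left\{\frac{\lambda^2 \|w\|_2^2}{2}\right\}.\]

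To bound this permutation MGF, I would define the Doob martingale $M_k = \EEst{\sum_{i=1}^n w_i v_{\pi(i)}}{\pi(1),\dots,\pi(k)}$, so that $M_0 = \bar v \sum_{i=1}^n w_i$ and $M_n = \sum_{i=1}^n w_i v_{\pi(i)}$. Letting $R_k = [n] \setminus \{\pi(1),\dots,\pi(k)\}$, writing $\bar v_{R_{k-1}}$ for the average of $\{v_j : j\in R_{k-1}\}$ and $\bar w_{>k}$ for the average of $w_{k+1},\dots,w_n$, a short direct computation (using that, conditionally on $\pi(1),\dots,\pi(k-1)$, the variables $(\pi(k),\dots,\pi(n))$ are a uniform permutation of $R_{k-1}$) yields the clean identity
\[D_k := M_k - M_{k-1} = (w_k - \bar w_{>k})\bigl(v_{\pi(k)} - \bar v_{R_{k-1}}\bigr),\qquad k=1,\dots,n-1,\]
with $D_n = 0$. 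Conditionally on $\pi(1),\dots,\pi(k-1)$, the variable $v_{\pi(k)}$ is uniform on $\{v_j : j\in R_{k-1}\}\subset[-1,1]$ with mean $\bar v_{R_{k-1}}$ and range at most $2$, so Hoeffding's lemma yields $\EEst{e^{\lambda D_k}}{\pi(1),\dots,\pi(k-1)} \leq \exp\{\lambda^2(w_k - \bar w_{>k})^2/2\}$. Iterating via the tower property then gives
\[\EE{\exp\!\left\{\lambda\sum_{i=1}^n w_i(v_{\pi(i)} - \bar v)\right\}} \leq \exp\!\left\{\frac{\lambda^2}{2}\sum_{k=1}^{n-1}(w_k - \bar w_{>k})^2\right\}.\]

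The remaining and hardest step is the deterministic inequality $\sum_{k=1}^{n-1}(w_k - \bar w_{>k})^2 \leq \|w\|_2^2$, which is where the non-negativity assumption is crucial. Expanding the square,
\[\sum_{k=1}^{n-1}(w_k - \bar w_{>k})^2 = \sum_{k=1}^{n-1} w_k^2 \;-\; \sum_{k=1}^{n-1} \bar w_{>k}\bigl(2w_k - \bar w_{>k}\bigr),\]
and under the sorted non-negative assumption one has $w_k \geq \bar w_{>k} \geq 0$ (since $\bar w_{>k}$ is an average of entries no larger than $w_k$ and all entries are non-negative), so $2w_k - \bar w_{>k} \geq w_k \geq 0$ and the subtracted sum is non-negative. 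Hence the right-hand side is at most $\sum_{k=1}^{n-1} w_k^2 \leq \|w\|_2^2$. Without the non-negativity assumption, the comparison $w_k \geq \bar w_{>k}$ can fail even for sorted $w$, and small examples show that $\sum_k(w_k - \bar w_{>k})^2$ can then exceed $\|w\|_2^2$---this is precisely the obstruction that forces the $\eps_N$ correction in the general Theorem~\ref{thm:Hoeffding_exch}.
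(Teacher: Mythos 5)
Your proof is correct and is essentially the paper's argument in disguise: your Doob-martingale increments $D_k=(w_k-\bar w_{>k})(v_{\pi(k)}-\bar v_{R_{k-1}})$ are exactly the terms $(A_nw)_k(X_k-\bar X_{\geq k})$ that the paper obtains by combining the supermartingale of Proposition~\ref{prop:Hoeffding_martingale} with the identity $A_n^\top(\mathbf{I}_n-B_n)=\mathcal{P}^\perp_{\mathbf{1}_n}$, and your closing deterministic inequality is the paper's $\|A_nw\|_2^2\le\|w\|_2^2$, proved via the same observation that $0\le\bar w_{>k}\le w_k$ for sorted nonnegative weights. The only difference is presentational: you rederive the martingale decomposition from first principles rather than citing it.
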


 \subsection{A Bernstein bound for the exchangeable case}\label{sec:main_results_Bernstein}
 We next present a Bernstein type bound, which offers tighter bounds in settings where $\sigma^2_X$ is small.
   \begin{theorem}\label{thm:Bernstein_exch}
 Let $w\in\R^n$ be fixed and let $X_1,\hdots,X_N\in[-1,1]$ be exchangeable, where $N\geq n \geq 2$.  Then
 for any $\lambda\in\R$ with $|\lambda|< \frac{3}{2\|w\|_\infty(1+\eps_N)}$,
\[ \EE{\exp\left\{  \lambda\sum_{i=1}^n w_i (X_i - \bar{X})- \frac{\lambda^2(1+\eps_N)}{2(1-\frac{2|\lambda|}{3}\|w\|_\infty(1+\eps_N))}\cdot \tilde\sigma^2_{X,N} \|w\|^2_2\right\}}\leq 1,\]
where $\tilde{\sigma}_{X,N}^2 = \sigma^2_X+4\eps_N$. 
Consequently, for any $\delta\in(0,1)$,
\[\PP{\sum_{i=1}^n w_i (X_i-\bar{X}) \geq \tilde\sigma_{X,N} \|w\|_2 \sqrt{2(1+\eps_N) \log(1/\delta)} + \frac{2}{3}\|w\|_\infty (1+\eps_N) \log(1/\delta)} \leq \delta.\]
\end{theorem}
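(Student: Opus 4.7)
I would follow the same high-level strategy as the Hoeffding bound in Theorem~\ref{thm:Hoeffding_exch}, replacing the sub-Gaussian conditional MGF by a Bernstein-style conditional MGF that tracks variance. The first step is to condition on the unordered multiset $\{X_1,\ldots,X_N\}$, which reduces the problem to the combinatorial statement: for a deterministic vector $x\in[-1,1]^N$ with mean $\bar{x}$ and variance $\sigma^2_x$, and a uniformly random permutation $\pi$ of $[N]$, bound the MGF of $\sum_{i=1}^n w_i(x_{\pi(i)}-\bar{x})$. After subtracting $\bar{x}$, the centered values $x_i - \bar{x}$ lie in $[-2,2]$, which is presumably where the factor $4 = 2^2$ in $\tilde\sigma^2_{X,N} = \sigma^2_X + 4\eps_N$ ultimately enters.

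Next, I would reveal the permutation one coordinate at a time, $x_{\pi(1)},x_{\pi(2)},\ldots$, and build the MGF bound by iterated conditioning. At step $i$, conditional on the past, $x_{\pi(i)}$ is uniformly distributed over the $N-i+1$ not-yet-revealed coordinates of $x$. For the per-step increment I would apply the classical one-variable Bernstein MGF inequality (for a bounded, mean-zero random variable with variance at most $v$ and range at most $B$, the MGF is bounded by $\exp\{\lambda^2 v / (2(1 - \tfrac{2}{3}B|\lambda|))\}$) and chain the resulting bounds across $i=1,\ldots,n$. After chaining, the exponent becomes $\tfrac{1}{2}\lambda^2$ divided by the Bernstein denominator and multiplied by a weighted sum of per-step conditional variances.

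The main obstacle, and the real content of the proof, is to control two corrections relative to the clean i.i.d.\ Bernstein bound of Theorem~\ref{thm:Bernstein_iid}: (i) the inflation of $\|w\|_2^2$ by $(1+\eps_N)$, and (ii) the inflation of $\sigma^2_X$ to $\sigma^2_X + 4\eps_N$. Correction (i) should arise exactly as in Theorem~\ref{thm:Hoeffding_exch}: matching the natural ``sequential reveal'' increments with the non-constant weights $w_i$ introduces a penalty which, after the relevant optimization, evaluates to $\eps_N = (H_N-1)/(N-H_N)$ by summing the harmonic-type contributions $\frac{1}{N-i+1}$ arising at each reveal step. Correction (ii) should come from the fact that the conditional variance of $x_{\pi(i)}$ given the history is not the global sample variance $\sigma^2_x$ but the sample variance of the remaining $N-i+1$ coordinates; using the $[-2,2]$ range of the centered values to bound the per-step discrepancy and summing over $i=1,\ldots,n$ should produce a deterministic upper bound of the form $\sigma^2_x + 4\eps_N$.

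Once the joint MGF bound $\EE{\exp\{\lambda\sum_i w_i(X_i-\bar X) - \psi(\lambda)\}}\leq 1$ is in place, with $\psi(\lambda) = \frac{\lambda^2(1+\eps_N)}{2(1-\frac{2|\lambda|}{3}\|w\|_\infty(1+\eps_N))}\tilde\sigma^2_{X,N}\|w\|^2_2$, the tail bound follows by the standard Chernoff argument applied exactly as in Theorem~\ref{thm:Bernstein_iid}: apply Markov's inequality to get $\PP{S\geq t}\leq e^{-\lambda t}\EE{e^{\lambda S}}$, substitute the MGF bound, and optimize over $\lambda$ in the admissible range $|\lambda|<\frac{3}{2\|w\|_\infty(1+\eps_N)}$. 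This optimization is formally identical to the i.i.d.\ case and produces the stated two-term form $\tilde\sigma_{X,N}\|w\|_2\sqrt{2(1+\eps_N)\log(1/\delta)}+\frac{2}{3}\|w\|_\infty(1+\eps_N)\log(1/\delta)$.
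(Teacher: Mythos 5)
Your plan is essentially the paper's proof: reduce to a uniformly random permutation of a fixed vector, reveal coordinates sequentially, apply the one-variable Bernstein MGF bound to each increment (with conditional mean $\bar{X}_{\geq i}$, conditional variance $\sigma^2_{X_{\geq i}}$, and range $2\|w\|_\infty$, which is indeed where the $4=2^2$ enters), chain into a supermartingale, and finish with the standard Chernoff optimization; conditioning on the multiset also correctly handles the fact that $\tilde\sigma^2_{X,N}$ is random in the tail-bound step. The one place your outline is under-specified is the mechanism by which the order-dependent sequential bound becomes symmetric in $w$ and produces exactly $\eps_N$: the paper applies the supermartingale bound with weights $\Pi w$ for every permutation matrix $\Pi$, uses $X\eqd\Pi X$, averages over all $n!$ permutations with Jensen's inequality, and computes $\frac{1}{n!}\sum_\Pi \Pi^\top B_n\Pi = \frac{1}{1+\eps_n}\mathcal{P}_{\mathbf{1}_n}+\frac{\eps_n}{1+\eps_n}\mathbf{I}_n$, which simultaneously yields the $(1+\eps_N)$ inflation and the $+4\eps_N$ variance correction; your ``after the relevant optimization'' gestures at this but does not name it.
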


\subsection{From exchangeable to i.i.d.}\label{sec:main_results_exch_to_iid}

Next, we will show that the original i.i.d.\ versions of the Hoeffding and Bernstein bounds, given in Theorems~\ref{thm:Hoeffding_iid} 
and~\ref{thm:Bernstein_iid},
 can be recovered from our new results for exchangeable data, by taking $N\to \infty$.
 This is the goal of aim (b) (from Section~\ref{sec:intro}).
 
Consider an infinite sequence $X_1,X_2,\hdots\iidsim P$.
Recalling the notation $X_{\leq N} = (X_1,\dots,X_N)$,
for the Hoeffding bound we then have
\begin{multline*}
\EE{\exp\left\{ \lambda\sum_{i=1}^n w_i (X_i - \mu_P) - \frac{\lambda^2}{2}\|w\|^2_2\right\}}\\ 
=\lim_{N\to\infty} \EE{ \exp\left\{ \lambda\sum_{i=1}^n w_i (X_i - \bar{X}_{\leq N}) -\frac{\lambda^2}{2}\|w\|^2_2\cdot(1+\eps_N) \right\}} \leq 1,\end{multline*}
where for the first step we apply the dominated convergence theorem along with the fact that
 $\bar{X}_{\leq N}\to\mu_P$ almost surely by the strong law of large numbers (and also $\eps_N\to 0$), while  the last step applies our new result,
 Theorem~\ref{thm:Hoeffding_exch}, to the exchangeable finite sequence $X_{\leq N}$. This is equivalent to the result of Theorem~\ref{thm:Hoeffding_iid} for the i.i.d.\ case.

Next we turn to the Bernstein bound. Assume $|\lambda|< \frac{2}{3\|w\|_\infty}$ (and so, since $\eps_N\to 0$,
for sufficiently large $N$ we have $|\lambda|< \frac{2}{3\|w\|_\infty(1+\eps_N)}$). Then
\begin{multline*}
\EE{\exp\left\{ \lambda\sum_{i=1}^n w_i (X_i - \mu_P) - \frac{\lambda^2}{2(1-\frac{2|\lambda|}{3}\|w\|_\infty)}\sigma^2_P\|w\|^2_2\right\}}\\ 
= \lim_{N\to\infty}\EE{ \exp\left\{  \lambda\sum_{i=1}^n w_i (X_i - \bar{X}_{\leq N})- \frac{\lambda^2(1+\eps_N)}{2(1-\frac{2|\lambda|}{3}\|w\|_\infty(1+\eps_N))} \tilde\sigma^2_{X_{\leq N},N}\|w\|^2_2\right\}}
\leq 1,\end{multline*}
by similar arguments as for the Hoeffding bound (since $\bar{X}_{\leq N}\to\mu_P$ and  $\tilde\sigma^2_{X_{\leq N},N}\to\sigma^2_P$ almost surely).
This recovers Theorem~\ref{thm:Bernstein_iid} for the i.i.d.\ case.

\section{Related work}\label{sec:related_work}

\subsection{Existing results for special cases}\label{sec:related_work_special_cases}
We first summarize some existing results for special cases of the problem.

\paragraph{The i.i.d.\ setting.} The i.i.d.\ setting, where $X_1,X_2,\hdots\iidsim P$ for some distribution $P$,
is of course a special case of exchangeability.
In addition to the classical Hoeffding bound \citep{hoeffding1963probability} and Bernstein bound \citep{bernstein1946theory} for the i.i.d.\ setting (presented 
in Theorems~\ref{thm:Hoeffding_iid} and~\ref{thm:Bernstein_iid}),
many other types of concentration bounds are well established in the literature, and may provide sharper bounds
for certain settings. See \citet{boucheron2013concentration} for background on a range of such bounds.
More recent literature has considered the extension to time-uniform bounds---providing bounds that hold
uniformly over a sequence of times $n$ rather than at a single fixed $n$.
In particular, the work of \citet{howard2020time,howard2021time} provides a martingale based approach
to this problem (this is closely related to time-uniform bounds for sampling without replacement, studied
by \citet{waudby2020confidence}, whose work we describe in more detail below).

\paragraph{The unweighted sum setting.}
Another special case is that of an unweighted sum, $\sum_{i=1}^n X_i$.
As we discussed earlier in Section~\ref{sec:background_sampling_WOR} (see~\eqref{eqn:Serfling_WOR}), 
in the setting of sampling without replacement,
 Hoeffding's inequality can be strengthened further to Serfling's inequality \citep{serfling1974probability}.
Multiple works in the more recent literature have obtained additional refined bounds for this problem.
For example, the work of \citet{bardenet2015concentration} gives a tighter version of the Hoeffding--Serfling inequality,
and also derives some Bernstein type bounds.
Stronger Bernstein type results are also established by \citet{greene2017exponential}, 
via relating the distribution of the unweighted sum $\sum_{i=1}^n X_i$ to a Hypergeometric distribution.

While these results consider a problem that is clearly closely related to the central question of this work,
the general case (where weights $w_i$ may not be constant) is a more challenging problem---as we saw in Example~\ref{example:n=2},
the unweighted case is fundamentally different from the general case, where the $w_i$'s may vary.

\subsection{Existing results for weighted sums}\label{sec:related_work_weighted_sums}
Next, we compare to existing results for the more general problem considered in this paper---the weighted
sum $\sum_{i=1}^n w_i X_i$, for arbitrary weights $w_i$ (and exchangeable $X_i$'s).

First, we present the result of  \cite[Theorem 1]{gan2010fast}, which proves
a Hoeffding type bound using an elegant argument based on Stein's exchangeable pairs.
Translating their result into our notation, their theorem implies that
\[\PP{ \left| \sum_{i=1}^n w_i (X_i - \bar{X}) \right| \geq \|w\|_2 \sqrt{4\log(2/\delta)} } \leq \delta,\]
for any $\delta\in(0,1)$. 
Compare this to our tail bound in Theorem~\ref{thm:Hoeffding_exch}, where we have a factor $2(1+\eps_N)$ in place of the factor of $4$.
In particular, since $\eps_N\to 0$ (and $\eps_N\leq 1$ for all $N\geq 2$), our bound is strictly tighter, and is novel in being able to achieve
the same constant as the 
i.i.d.\ bound in the limit $N\to\infty$ (as in aim (b)).

We next turn to the recent results of 
\citet{waudby2020confidence}. The central problem studied in this work is different from our work.
If we observe $X_1,\hdots,X_n$ as a sample drawn without replacement from a finite population of size $N$,
our aim is to bound deviations of $\sum_{i=1}^n w_i X_i$ for \emph{fixed} weights $w_i$. In particular, for nonconstant
weights, this question is nontrivial even if $\bar{X}$, the mean of the finite population,  is known.
In contrast, \citet{waudby2020confidence}'s aim is to estimate an \emph{unknown} mean $\bar{X}$ using the sample $X_1,\hdots,X_n$.
They do this by  \emph{designing} weights $w_1,\hdots,w_n$ such that the sum, $\sum_{i=1}^n w_i X_i$,
can be used to construct a tight confidence intervals for $\bar{X}$.
Despite these differences, these two questions are closely related, since a bound on
$\left|\sum_{i=1}^n w_i (X_i - \bar{X})\right|$
(as we provide in our main results here) can provide as a confidence interval for the mean $\bar{X}$.

Another key distinction 
is that \citet{waudby2020confidence}
study the streaming setting, providing a time-uniform guarantee (i.e.,
a valid confidence interval for $\bar{X}$ simultaneously at every $n=1,\hdots,N$), 
which is a stronger guarantee than the fixed-$n$ case considered
in this work. As a consequence, though, the problem they study is not invariant to permuting indices,
due to the sequential nature of the streaming setting.
In contrast, as described in aim (c)  in Section~\ref{sec:intro}, since we are only considering a fixed value of $n$ in our work,
any interpretable bound in our setting should be invariant to reordering the indices.
However,
the results of \citet{waudby2020confidence} provide some key technical tools that we will use in this work 
for proving our main results.
We remark also that \cite[Appendix D]{shekhar2023risk} extend these results to allow for a setting
where the $X_i$'s are sampled with a weighted, rather than uniform, distribution.

\paragraph{The matrix setting.}
A more challenging problem that has also been considered in the literature
is the following: given a fixed matrix $A\in\R^{n\times n}$, derive concentration bounds
for the sum
$\sum_{i=1}^n A_{i,\pi(i)}$,
where the permutation $\pi\in\mathcal{S}_n$ is sampled uniformly at random (here $\mathcal{S}_n$ denotes
the set of permutations of $[n]$).
To connect this to the problem of bounding a weighted sum $\sum_{i=1}^n w_i X_i$,
we consider a \emph{fixed} vector $x=(x_1,\hdots,x_n)\in[-1,1]^n$, and let $X_i = x_{\pi(i)}$ be defined by a
random permutation of $x$ (equivalently, by sampling without replacement from the entries of $x$).
If we then consider the rank-$1$ matrix $A=wx^\top$, we have $\sum_{i=1}^n w_i X_i = \sum_{i=1}^n A_{i,\pi(i)}$.
In other words, any result bounding the sum in the matrix setting, can be applied to the setting
of a weighted sum of exchangeable random variables as a special case.

Examples of results of this type include those of \citet{chatterjee2007stein} and \citet{albert2019concentration}
for the case $A_{ij}\in[0,1]^{n\times n}$, which are tight when the values of $A$ mostly concentrate near zero---this setting is
therefore not directly
comparable to our work.
The more recent work of  \citet{polaczyk2023concentration} offers a result that is more related to the problem studied here: they consider a matrix $A\in[-1,1]^{n\times n}$,  and assume that the matrix is centered, $\sum_{i=1}^n\sum_{j=1}^n A_{ij}=0$.
They prove
\[\PP{ \sum_{i=1}^n A_{i,\pi(i)} \geq t } \leq  2 \exp\left\{ - \frac{t}{36}\cdot \log\left(1 + \frac{t}{\frac{36}{n}\sum_{i=1}^n \sum_{j=1}^n A_{ij}^2}\right)\right\}.\]
To compare this to our work, suppose $\|w\|_\infty\leq 1$, and assume again that $X_i = x_{\pi(i)}$ for a fixed vector $x_1,\hdots,x_n$.
Consider the simple case $\bar{x}=0$. 
Then $A= wx^\top$ is centered and takes values in $[-1,1]$. Applying  \citet{polaczyk2023concentration}'s bound,
and using the fact that $\log(1+\eps)\geq\frac{\eps}{1+\eps}$ for any $\eps>0$
to simplify the expression, we obtain
\[\PP{ \sum_{i=1}^n w_i (X_i-\bar{X}) \geq 36\sigma_X\|w\|_2\sqrt{\log(2/\delta)} + 36\log(2/\delta) } \leq \delta\]
for any $\delta\in(0,1)$. Up to constants,
this  has the same form as our Bernstein tail bound for the exchangeable setting (Theorem~\ref{thm:Bernstein_exch}).
However, the constants here are substantially larger, and in particular will not recover
the classical bound for the i.i.d.\ case.

\subsection{Additional related work}\label{sec:related_work_additional}
Finally, we mention a related line of work, which studies the problem of establishing a central limit theorem 
for sums of exchangeable, rather than i.i.d.\ (or independent), random variables.
\citet{hajek1960limiting} establishes a central limit theorem for the unweighted case,
$\sum_{i=1}^n X_i$ (where $X_i$'s are sampled without replacement from a finite population). 
\citet{bolthausen1984estimate} studies the problem for the matrix case, establishing a central
limit theorem for the sum $\sum_{i=1}^n A_{i,\pi(i)}$.
More recent work by  \citet{li2017general} considers an extension to a problem in causal inference, for
the setting of random treatment assignment into multiple groups.

 \section{Discussion}\label{sec:discussion}
In this work, we have established new versions of Hoeffding and Bernstein inequalities for weighted
 sums of exchangeable random variables, $w_1 X_1 + \hdots + w_n X_n$ for an exchangeable
 sequence $X_1, \hdots , X_n,X_{n+1},\hdots,X_N$. 
 Compared to prior work, these new bounds offer the benefit of reducing to existing results for the i.i.d.\ setting
in the limit $N\to \infty$. Essentially, we can think of these results as providing a continuous interpolation between
the setting of a random permutation (where $N=n$, and so $X_1,\hdots,X_n$ is simply a random permutation of a finite list)
and the setting of i.i.d.\ sampling (where $N=\infty$).

Examining these results in the context of the existing literature raises a number of open questions
and potential extensions. 
First, it is natural to ask whether the bounds are tight: for instance, are the inflation terms $(1+\eps_N)$ unavoidable?
We have seen that in the special case of nonnegative weights $w_i\geq 0$, this inflation can be avoided for the Hoeffding
bound (and the i.i.d.\ Hoeffding bound is recovered). However, it is unclear if the same might be true for the Bernstein bound
with nonnegative weights. 

Next, for the i.i.d.\ setting, recent lines of work have established concentration 
bounds that are stronger or more flexible than the Hoeffding and Bernstein bounds---for example, empirical Bernstein
inequalities, where the variance $\sigma^2_P$ of the distribution can be replaced by an empirical estimate
(see \citet{boucheron2013concentration} for a survey of these types of results).
\citet{waudby2020confidence}'s recent work provides empirical Bernstein bounds for exchangeable data in the streaming setting.
 Can empirical Bernstein type bounds be derived for exchangeable data in a way that satisfies
 the aims (a), (b), and (c) introduced in Section~\ref{sec:intro}?
 
 Furthermore, as mentioned in Section~\ref{sec:intro}, finite versions of De Finetti's theorem
 \citep{diaconis1980finite,gavalakis2021information} bound the distance between the distribution of 
 $X_1,\dots,X_n$ (a subsequence of $N\gg n$ exchangeable random variables), and a mixture of i.i.d.\ distributions.
It is interesting to examine how these types of results connect to the questions of this paper, and 
whether these results may be used to derive additional concentration properties of the weighted sum $w_1X_1 + \dots + w_nX_n$
studied in this work.

Finally, in Section~\ref{sec:related_work} we also presented some existing results 
on a matrix version of the problem, where the sum is given by $\sum_{i=1}^n A_{i,\pi(i)}$ for a fixed matrix $A\in\R^{n\times n}$
and a random permutation $\pi$. It is possible that the technical tools developed
in this work could be extended to derive new bounds for this matrix setting as well.

\appendix

\section{Proof of the Hoeffding bound}\label{sec:app_proofs1}
In this section we prove Theorem~\ref{thm:Hoeffding_exch}, the Hoeffding bound for the exchangeable case.
We first review a martingale-based bound on the moment-generating function (MGF) that was established by \citet{waudby2020confidence}.

\begin{proposition}[{\cite[Theorem 3.1]{waudby2020confidence}}]\label{prop:Hoeffding_martingale}
Fix $N\geq n\geq 1$ and  $v\in\R^n$, and
let $X=(X_1,\hdots,X_n)$ where  $X_1,\hdots,X_n\in[-1,1]$ are exchangeable. 
Fix any $\lambda\in\R$, and define $M_0=1$ and
\[M_k = \exp\left\{ \lambda \sum_{i=1}^k v_i (X_i - \bar{X}_{\geq i}) - \frac{\lambda^2}{2}\sum_{i=1}^k v_i^2\right\}\]
for $k\in[n]$.
Then $M_k$ is a supermartingale, i.e., $\EEst{M_k}{M_{k-1}} \leq M_{k-1}$ for all $k\in[n]$.
\end{proposition}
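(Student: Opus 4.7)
The plan is to verify the slightly stronger statement that $M_k$ is a supermartingale with respect to the natural filtration $\mathcal{F}_{k-1}=\sigma(X_1,\dots,X_{k-1})$; since $M_{k-1}$ is $\mathcal{F}_{k-1}$-measurable, the tower property then yields the conditional form stated in the proposition. Because
\[
M_k/M_{k-1}=\exp\!\left\{\lambda v_k(X_k-\bar{X}_{\geq k}) - \tfrac{\lambda^2 v_k^2}{2}\right\},
\]
the task reduces to the one-step MGF bound
\[
\EEst{\exp\{\lambda v_k(X_k-\bar{X}_{\geq k})\}}{\mathcal{F}_{k-1}} \;\leq\; \exp\{\lambda^2 v_k^2/2\}
\]
for every $k\in[n]$.

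To obtain this bound, I would condition further on the unordered multiset $\mathcal{S}=\{X_k,\dots,X_n\}$. Exchangeability of $(X_1,\dots,X_n)$ implies that, given $\mathcal{F}_{k-1}$, the remaining tuple $(X_k,\dots,X_n)$ is exchangeable; given also $\mathcal{S}$, it is a uniformly random permutation of $\mathcal{S}$, so $X_k$ is uniform over the values in $\mathcal{S}$. Crucially, $\bar{X}_{\geq k}$ is precisely the mean of $\mathcal{S}$ and is therefore deterministic once we condition on $\mathcal{S}$. Hence, under the conditional distribution given $\mathcal{F}_{k-1}$ and $\mathcal{S}$, the random variable $X_k-\bar{X}_{\geq k}$ has mean zero and is supported in an interval of length $\max\mathcal{S}-\min\mathcal{S}\leq 2$ (using $X_i\in[-1,1]$). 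Hoeffding's lemma then yields
\[
\EEst{\exp\{\lambda v_k(X_k-\bar{X}_{\geq k})\}}{\mathcal{F}_{k-1},\mathcal{S}} \;\leq\; \exp\!\left\{\tfrac{\lambda^2 v_k^2(\max\mathcal{S}-\min\mathcal{S})^2}{8}\right\} \;\leq\; \exp\{\lambda^2 v_k^2/2\},
\]
and averaging over $\mathcal{S}$ under the conditional law given $\mathcal{F}_{k-1}$ discharges the extra conditioning, producing the one-step bound.

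The only substantive choice in the argument is identifying the correct $\sigma$-algebra on which to condition. Enlarging it to include $\mathcal{S}$ is the move that converts the coupled sampling-without-replacement dynamics at step $k$ into an elementary uniform draw from a finite set of bounded values with known mean---exactly the scalar setup Hoeffding's lemma was designed for. Everything else is arranged by construction: the increments of $M_k$ are built so that the subtracted $-\tfrac{\lambda^2}{2}\sum_{i\leq k} v_i^2$ term cancels precisely the Hoeffding variance factor produced at each step.
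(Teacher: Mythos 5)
Your argument is essentially the right one, and it matches the structure the paper uses for the analogous Bernstein martingale (Proposition~\ref{prop:Bernstein_martingale}); the paper does not actually write out a proof of Proposition~\ref{prop:Hoeffding_martingale} itself, citing it instead. The core steps --- reduce to a one-step MGF bound, condition so that $\bar{X}_{\geq k}$ becomes deterministic while $X_k$ becomes a uniform draw from the remaining multiset with that mean, and invoke Hoeffding's lemma with range at most $2|v_k|$ to get the factor $e^{\lambda^2 v_k^2/2}$ --- are all correct.

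There is one genuine (though easily repaired) slip: your opening claim that $M_{k-1}$ is measurable with respect to $\mathcal{F}_{k-1}=\sigma(X_1,\dots,X_{k-1})$ is false. The increments $\bar{X}_{\geq i}$ for $i\leq k-1$ depend on $X_k,\dots,X_n$, so $M_{k-1}$ is not a function of $X_1,\dots,X_{k-1}$ alone, and the tower-property step as you state it does not go through. This is exactly why the paper, in proving Proposition~\ref{prop:Bernstein_martingale}, takes the filtration to be generated by $\bar{X},\sigma^2_X,X_1,\dots,X_k$ rather than by $X_1,\dots,X_k$: since $\bar{X}_{\geq i}=\frac{n\bar{X}-(X_1+\dots+X_{i-1})}{n-i+1}$, augmenting by $\bar{X}$ restores measurability. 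The fix is already implicit in your own argument: work throughout with the filtration $\mathcal{G}_{k-1}=\sigma\bigl(X_1,\dots,X_{k-1},\{X_k,\dots,X_n\}\bigr)$ (the $\sigma$-algebra including the unordered multiset of the remaining values). This is a genuine filtration, $M_{k-1}$ is $\mathcal{G}_{k-1}$-measurable, your conditional-uniform-permutation description of $(X_k,\dots,X_n)$ given $\mathcal{G}_{k-1}$ is exactly correct, and the one-step Hoeffding bound then holds conditionally on $\mathcal{G}_{k-1}$ with no further averaging needed. The tower property over $\sigma(M_{k-1})\subseteq\mathcal{G}_{k-1}$ then gives the stated conclusion.
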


As we will see shortly, this bound cannot be immediately applied to establish the type 
of results that are the aim of this paper, but it provides an important step.

\begin{proof}[Proof of Theorem~\ref{thm:Hoeffding_exch}]
First we verify that
it suffices to consider the case $N=n$.
To see why, suppose $n<N$. 
Define 
$\tilde{w} = (w_1,\hdots,w_n,0,\hdots,0)\in\R^N$.
Since $\|\tilde{w}\|_2=\|w\|_2$, and $\sum_{i=1}^N \tilde{w}_i (X_i - \bar{X}) = \sum_{i=1}^n w_i(X_i - \bar{X})$,
 this means
that the result of Theorem~\ref{thm:Hoeffding_exch}
(applied with weight vector $w\in\R^n$) is equivalent to the results of the theorem when applied with weight vector $\tilde{w}\in\R^N$.
Therefore, from this point on, we only  consider the case $N=n$.

Define a matrix $B_n\in\R^{n\times n}$, with $i$th row $(\mathbf{0}_{i-1},  \frac{1}{n+1-i}\cdot \mathbf{1}_{n+1-i})$, where $\mathbf{1}_k$ and $\mathbf{0}_k$ denote the vector of 1's and the vector of 0's, respectively, of length $k$.
This matrix provides the following linear operator: for each $i\in[n]$,
$(B_n X)_i = \bar{X}_{\geq i}$, where  $X = (X_1,\hdots,X_n)$. Thus for any $v\in\R^n$,
$ \sum_{i=1}^n v_i (X_i - \bar{X}_{\geq i}) 
= v^\top (\mathbf{I}_n - B_n) X$.
Define also $A_n\in\R^{n\times n}$ with $i$th row $(\mathbf{0}_{i-1},1,-\frac{1}{n-i}\cdot\mathbf{1}_{n-i})$ for $i\in[n-1]$, and $n$th row $\mathbf{0}_n$.
We can verify that $A_n^\top (\mathbf{I}_n - B_n) =  \mathcal{P}^\perp_{\mathbf{1}_n}$,
where $ \mathcal{P}^\perp_{\mathbf{1}_n}$
denotes projection to the subspace orthogonal to $\mathbf{1}_n$.

Now fix any $u\in\R^n$ and let $v = A_n u$. We then have
\[\sum_{i=1}^n u_i (X_i - \bar{X}) = u^\top \mathcal{P}^\perp_{\mathbf{1}_n} X
= u^\top A_n^\top (\mathbf{I}_n - B_n) X = v^\top (\mathbf{I}_n - B_n) X =  \sum_{i=1}^n v_i (X_i - \bar{X}_{\geq i})  .\]
Therefore,
\begin{equation}\label{eqn:Hoeffding_bound_with_An} \EE{\exp\left\{ \lambda \sum_{i=1}^n u_i (X_i - \bar{X}) -\frac{\lambda^2}{2}\|A_n u\|^2_2\right\}}
= \EE{\exp\left\{\lambda \sum_{i=1}^n v_i(X_i-\bar{X}_{\geq i}) - \frac{\lambda^2}{2}\|v\|^2_2\right\}}\leq1,\end{equation}
where the last step holds by \cite[Theorem 3.1]{waudby2020confidence} (restated in Proposition~\ref{prop:Hoeffding_martingale} above---we are using the fact that, in the notation of the proposition, $\EE{M_n}\leq \EE{M_0}=1$).
We pause here to remark that this inequality could be used to bound the MGF of our weighted sum $\sum_{i=1}^n w_i (X_i - \bar{X}_n)$
(by taking $u=w$), but 
this form of the result would not satisfy aim (c), since $\|A_n w\|^2_2$ is not invariant to permuting the weights $w_1,\hdots,w_n$.

Now we will apply~\eqref{eqn:Hoeffding_bound_with_An} with $u = (A_n^\top A_n)^{\dagger} \Pi w$, where $\Pi\in\{0,1\}^{n\times n}$ is any permutation matrix, while $\dagger$ denotes the matrix pseudoinverse. 
Note that, since $A_n \mathbf{1}_n = 0$ by construction, this means that $ (A_n^\top A_n)^{\dagger} \mathbf{1}_n=0$ as well---this implies $u^\top\mathbf{1}_n =0$ and so $\sum_{i=1}^n u_i (X_i - \bar{X}) = u^\top X = w^\top \Pi^\top (A_n^\top A_n)^{\dagger} X$.
We then have
\[\EE{\exp\left\{ \lambda w^\top \Pi^\top (A_n^\top A_n)^{\dagger} X-\frac{\lambda^2}{2}\|A_n (A_n^\top A_n)^{\dagger} \Pi w\|^2_2\right\}}\leq 1,\]
 for each fixed permutation matrix $\Pi$.
Since $X\eqd \Pi X$ by the exchangeability assumption, this also holds with $\Pi X$ in place of $X$, for each $\Pi$. Taking the average over all $n!$ possible permutation matrices $\Pi$, and applying Jensen's inequality, we have
\[\EE{\exp\left\{  \frac{1}{n!}\sum_{\Pi} \left(\lambda w^\top \Pi^\top (A_n^\top A_n)^{\dagger}  \Pi X-\frac{\lambda^2}{2} w^\top \Pi^\top  (A_n^\top A_n)^{\dagger} \Pi w\right)\right\}}\leq 1.\]
An elementary calculation shows
$ \frac{1}{n!}\sum_{\Pi}   \Pi^\top (A_n^\top A_n)^{\dagger}\Pi = \frac{n-H_n}{n-1} \cdot \mathcal{P}^\perp_{\mathbf{1}_n} = \frac{1}{1+\eps_n} \cdot \mathcal{P}^\perp_{\mathbf{1}_n}$, and so
\begin{multline*}1\geq \EE{\exp\left\{\frac{1}{1+\eps_n} \cdot  \left(\lambda w^\top \mathcal{P}^\perp_{\mathbf{1}_n} X-\frac{\lambda^2}{2} w^\top  \mathcal{P}^\perp_{\mathbf{1}_n} w\right)\right\}} \\
\geq  \EE{\exp\left\{ \frac{1}{1+\eps_n} \cdot  \left(\lambda \sum_{i=1}^n w_i (X_i - \bar{X})-\frac{\lambda^2}{2}\|w\|^2_2\right)\right\}} ,\end{multline*}
where the last step holds since $w^\top \mathcal{P}^\perp_{\mathbf{1}_n} X =  \sum_{i=1}^n w_i (X_i - \bar{X})$ and $w^\top  \mathcal{P}^\perp_{\mathbf{1}_n} w\leq \|w\|^2_2$.
Replacing $\lambda$ with $\lambda (1+\eps_n)$ and rearranging terms yields  the desired MGF bound.
Finally, the tail probability bound holds due to the following standard fact \cite[Section 2.3]{boucheron2013concentration}:
for any random variable $Z\in\R$,
\begin{equation}\label{eqn:Hoeffding_from_MGF_to_tail}
\textnormal{If $\EE{e^{\lambda Z}}\leq e^{\lambda^2 a^2/2}$ for all $\lambda\in\R$ then 
$\PP{Z\geq a\sqrt{2\log(1/\delta)}}\leq \delta$ for all $\delta\in(0,1)$.}\end{equation}
\end{proof}

\begin{proof}[Proof of Theorem~\ref{thm:Hoeffding_exch_nonnegative}]
We will use the bound~\eqref{eqn:Hoeffding_bound_with_An}, which was derived as a preliminary step in the proof of Theorem~\ref{thm:Hoeffding_exch}.
Without loss of generality, assume that we have $w_1 \geq \hdots \geq w_n \geq 0$.
Applying~\eqref{eqn:Hoeffding_bound_with_An} with $u=w$, we have
$\EE{\exp\left\{ \lambda \sum_{i=1}^n w_i (X_i - \bar{X})\right\}} \leq \exp\left\{\frac{\lambda^2}{2}\|A_n w\|^2_2\right\}$.
For each $i\in[n]$, by definition of $A_n$ we have 
$\big((A_n w)_i\big)^2 = (w_i - \bar{w}_{\geq i})^2 \leq w_i^2$,
where the last step holds since, by our assumption on $w$, we have
$w_i \geq \bar{w}_{\geq i} \geq 0$.
Therefore, $\|A_n w\|^2_2 \leq \|w\|^2_2$, which completes the proof 
of the MGF bound. Finally, the tail probability bound follows immediately by applying the fact~\eqref{eqn:Hoeffding_from_MGF_to_tail}.
\end{proof}

 \section{Proof of the Bernstein bound}\label{sec:app_proofs2}

Recall that, for our proof of the Hoeffding bound, a key ingredient
was the martingale based bound of {\cite[Theorem 3.1]{waudby2020confidence}} (restated
in Proposition~\ref{prop:Hoeffding_martingale}). 
For the Bernstein MGF proof, we will need to follow a similar route. We begin by presenting a Bernstein type version of the
martingale result.
This result builds on recent techniques and related bounds from the literature, e.g., \cite[Theorem 3.2]{waudby2020confidence},
\cite[Corollary 1(c)]{howard2020time}, \cite[Section 5]{waudby2024estimating}.

\begin{proposition}\label{prop:Bernstein_martingale}
Fix $n\geq 1$ and  $v\in\R^n$, and let $X=(X_1,\hdots,X_n)$ where  $X_1,\hdots,X_n\in[-1,1]$ are exchangeable. 
Fix any $\lambda\in\R$ satisfying $|\lambda|<\frac{2}{3\|v\|_\infty}$, and define $M_0=1$, and
\[M_k = \exp\left\{ \lambda \sum_{i=1}^k v_i (X_i - \bar{X}_{\geq i})-\frac{\lambda^2}{2(1-\frac{2|\lambda|}{3}\|v\|_\infty)}\sum_{i=1}^k v_i^2\sigma^2_{X_{\geq i}}\right\}\textnormal{ for $k\in[n]$}.\]
Then $M_k$ is a supermartingale, i.e., $\EEst{M_k}{M_{k-1}} \leq M_{k-1}$ for all $k\in[n]$.
\end{proposition}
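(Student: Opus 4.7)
The plan is to establish the supermartingale property by a one-step conditional analysis, paralleling the structure that underlies Proposition~\ref{prop:Hoeffding_martingale} but replacing the scalar Hoeffding MGF bound with its Bernstein counterpart. Let $\mathcal{F}_{k-1}$ denote the $\sigma$-algebra generated by the ordered tuple $(X_1,\dots,X_{k-1})$ together with the multiset $\{X_k,X_{k+1},\dots,X_n\}$. Under exchangeability, conditional on $\mathcal{F}_{k-1}$ the variable $X_k$ is uniformly distributed on this remaining multiset, and in particular has conditional mean $\bar{X}_{\geq k}$ and conditional variance $\sigma^2_{X_{\geq k}}$. Since $\bar{X}_{\geq k}$, $\sigma^2_{X_{\geq k}}$ and $M_{k-1}$ are all $\mathcal{F}_{k-1}$-measurable, the claim $\EEst{M_k}{\mathcal{F}_{k-1}}\leq M_{k-1}$ reduces to the one-step inequality
\[
\EEst{\exp\{\lambda v_k (X_k-\bar{X}_{\geq k})\}}{\mathcal{F}_{k-1}}\;\leq\;\exp\left\{\frac{\lambda^2 v_k^2\sigma^2_{X_{\geq k}}}{2(1-\tfrac{2|\lambda|}{3}\|v\|_\infty)}\right\}.
\]

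For this one-step bound I would invoke the standard scalar Bernstein MGF inequality: if $Z$ is a zero-mean random variable with $|Z|\leq R$ and $\EE{Z^2}=s^2$, then $\EE{e^{\lambda Z}}\leq \exp\{\lambda^2 s^2/(2(1-R|\lambda|/3))\}$ whenever $|\lambda|<3/R$. Under our conditioning, $Z_k := v_k(X_k-\bar{X}_{\geq k})$ is zero-mean, has conditional second moment $v_k^2\sigma^2_{X_{\geq k}}$, and satisfies $|Z_k|\leq 2\|v\|_\infty$ because $X_k,\bar{X}_{\geq k}\in[-1,1]$. Substituting $R=2\|v\|_\infty$ and $s^2=v_k^2\sigma^2_{X_{\geq k}}$ reproduces exactly the compensator that appears in the definition of $M_k$, and the stated range $|\lambda|<\tfrac{2}{3\|v\|_\infty}$ implies $|\lambda|<\tfrac{3}{2\|v\|_\infty}=3/R$, so the scalar Bernstein bound is in force.

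The main obstacle is justifying cleanly the exchangeability claim that conditioning on $(X_1,\dots,X_{k-1})$ together with the multiset $\{X_k,\dots,X_n\}$ makes $X_k$ uniform on that multiset. This is the standard expression of the equivalence between exchangeability and sampling without replacement from the empirical distribution of $X_1,\dots,X_n$, but it is strictly stronger than conditioning on $(X_1,\dots,X_{k-1})$ alone and should be spelled out carefully, e.g.\ via a symmetry argument over the permutations that fix the ordered prefix and the remaining multiset. A secondary technical point is tracking the factor of $2$ coming from $|X_k-\bar{X}_{\geq k}|\leq 2$, which propagates into the $\tfrac{2}{3}\|v\|_\infty$ factor in the denominator of the compensator, and verifying that the scalar Bernstein bound with this $R$ matches the compensator exactly.
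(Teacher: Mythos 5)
Your proof is correct and follows essentially the same route as the paper: a one-step conditional argument in which $\bar{X}_{\geq k}$ and $\sigma^2_{X_{\geq k}}$ are measurable with respect to the conditioning $\sigma$-algebra, combined with the scalar Bernstein MGF bound applied to $Z_k=v_k(X_k-\bar{X}_{\geq k})$ with $|Z_k|\leq 2\|v\|_\infty$. The only cosmetic difference is that you condition on the prefix together with the full remaining multiset (making $X_k$ uniform on it), whereas the paper conditions only on the prefix together with $\bar{X}$ and $\sigma^2_X$ and invokes conditional exchangeability of $X_k,\dots,X_n$; both choices make the compensator predictable and yield the same one-step inequality.
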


\begin{proof}[Proof of Proposition~\ref{prop:Bernstein_martingale}]
By construction, for each $k\in[n]$ we can write
\[M_k = M_{k-1} \cdot \exp\left\{ \lambda v_k (X_k - \bar{X}_{\geq k})  - \frac{\lambda^2}{2(1-\frac{2|\lambda|}{3}\|v\|_\infty)} v_k^2\sigma^2_{X_{\geq k}}\right\}.\]
Define a filtration $\{\Fcal_k\}_{k\in\{0,\hdots,n\}}$, where $\Fcal_k$ is the $\sigma$-algebra
generated by $\bar{X},\sigma^2_X,X_1,\hdots,X_k$. 
For any $i\leq k$, 
$\bar{X}_{\geq i} = \frac{X_i + \hdots + X_n}{n-i+1} = \frac{n\bar{X} - (X_1 + \hdots + X_{i-1})}{n-i+1}$,
 so
$\bar{X}_{\geq i}$ is $\Fcal_{k-1}$-measurable.
A similar calculation verifies that $\sigma^2_{X_{\geq i}}$ is $\Fcal_{k-1}$-measurable, as well.
This then implies $M_k$ is $\Fcal_k$-measurable
for each $k$. Conditional on $\Fcal_{k-1}$, the random variables $X_k ,\hdots,X_n$ are exchangeable,
and so $X_k$ has conditional mean $\bar{X}_{\geq k}$ and conditional variance $\sigma^2_{X_{\geq k}}$.
And, since the $X_i$'s take values in $[-1,1]$,  $|v_k(X_k - \bar{X}_{\geq k})|\leq 2\|v\|_\infty$ almost surely.
Then
\[M_{k-1}^{-1} \EEst{M_k}{\Fcal_{k-1}} =  \EEst{ \exp\left\{ \lambda v_k (X_k - \bar{X}_{\geq k}) - \frac{\lambda^2}{2(1-\frac{2|\lambda|}{3}\|v\|_\infty)} v_k^2\sigma^2_{X_{\geq k}}\right\}}{\Fcal_{k-1}}\leq  1,\]
by
 the basic Bernstein inequality, which states that if $\EE{Z}=0$ and $\textnormal{Var}(Z)=\sigma^2$ and $|Z|\leq B$ almost surely, then
$\EE{e^{t Z}} \leq e^{\frac{t^2\sigma^2}{2(1-|t|B/3)}}$
 for any $|t|<3/B$.\end{proof}

We are now ready to prove the theorem.
\begin{proof}[Proof of Theorem~\ref{thm:Bernstein_exch}]
It suffices to consider the case $N=n$, by an analogous argument as for the Hoeffding bound in Theorem~\ref{thm:Hoeffding_exch}.
First, we apply the result of Proposition~\ref{prop:Bernstein_martingale}:
for any $v\in\R^n$ and $|\lambda|<\frac{3}{2\|v\|_\infty}$, we have $\EE{M_n}\leq \EE{M_0}=1$,
which we can rewrite as
\[ \EE{\exp\left\{ \lambda v^\top(\mathbf{I}_n - B_n)X-\frac{\lambda^2}{2(1-\frac{2|\lambda|}{3}\|v\|_\infty)}\sum_{i=1}^nv_i^2\sigma^2_{X_{\geq i}}\right\}}\leq1,\]
where $B_n$ is defined as before.
Moreover, for any $i$, we have
\[\sigma^2_{X_{\geq i}} = \frac{1}{n-i+1}\sum_{j=i}^n (X_j - \bar{X}_{\geq i})^2 
\leq \frac{1}{n-i+1}\sum_{j=i}^n (X_j - \bar{X})^2 = \bar{\tilde{X}}_{\geq i} = (B_n\tilde{X})_i,\]
where we define a vector $\tilde X$ with entries $\tilde X_i = (X_i - \bar{X})^2$. Then
$ \sum_{i=1}^n v_i^2\sigma^2_{X_{\geq i}} \leq  \sum_{i=1}^n v_i^2 (B_n \tilde{X})_i = \tilde{v}^\top B_n \tilde{X}$,
where $\tilde{v}$ is the vector with entries $\tilde{v}_i = v_i^2$. This yields
 \[\EE{\exp\left\{ \lambda v^\top (\mathbf{I}_n - B_n)X - \frac{\lambda^2}{2(1-\frac{2|\lambda|}{3}\|v\|_\infty)} \tilde{v}^\top B_n \tilde{X}\right\}}\leq 1.\]

We now apply this bound with $v = \Pi w$, where $\Pi\in\{0,1\}^{n\times n}$ is any permutation matrix. Note that $\tilde{v} = \Pi \tilde{w}$
and $\|v\|_\infty = \|\Pi w\|_\infty = \|w\|_\infty$. We then have, for any $|\lambda|<\frac{3}{2\|w\|_\infty}$,
 \[\EE{\exp\left\{ \lambda w^\top \Pi^\top (\mathbf{I}_n - B_n) X  - \frac{\lambda^2}{2(1-\frac{2|\lambda|}{3}\|w\|_\infty)} \tilde{w}^\top \Pi^\top B_n \tilde{X}\right\}}\leq 1.\]
Since $X\eqd \Pi X$, we can replace $X$ with $\Pi X$ in the expression above to obtain
 \[\EE{\exp\left\{ \lambda w^\top \Pi^\top (\mathbf{I}_n - B_n) \Pi X  - \frac{\lambda^2}{2(1-\frac{2|\lambda|}{3}\|w\|_\infty)} \tilde{w}^\top \Pi^\top B_n \Pi \tilde{X}\right\}}\leq 1,\]
 since $\widetilde{\Pi X} = \Pi\tilde{X}$ by construction.
Taking an average over all $n!$ possible permutation matrices $\Pi$ and applying Jensen's inequality, we therefore have
\[ \EE{\exp\left\{  \frac{1}{n!}\sum_{\Pi} \left(\lambda w^\top \Pi^\top (\mathbf{I}_n - B_n) \Pi X  - \frac{\lambda^2}{2(1-\frac{2|\lambda|}{3}\|w\|_\infty)} \tilde{w}^\top \Pi^\top B_n \Pi \tilde{X}\right)\right\}}\leq 1.\]
A straightforward calculation shows
$\frac{1}{n!}\sum_\Pi \Pi^\top B_n \Pi  =\mathbf{I}_n -  \frac{1}{1+\eps_n} \mathcal{P}^\perp_{\mathbf{1}_n} =  \frac{1}{1+\eps_n}  \mathcal{P}_{\mathbf{1}_n}+ \frac{\eps_n}{1+\eps_n} \mathbf{I}_n$.
We can also calculate $ w^\top \mathcal{P}^\perp_{\mathbf{1}_n} X= \sum_{i=1}^n w_i (X_i - \bar{X})$, and also,
\[ \tilde{w}^\top \mathcal{P}_{\mathbf{1}_n} \tilde{X} = \sum_{i=1}^n \tilde{w}_i\cdot\frac{ \sum_{i=1}^n \tilde{X}_i}{n} 
 = \|w\|^2_2 \sigma^2_X,\quad \tilde{w}^\top \mathbf{I}_n \tilde{X} = \sum_{i=1}^n \tilde{w}_i\tilde{X}_i = \sum_{i=1}^n w_i^2 (X_i - \bar{X})^2 \leq 4\|w\|^2_2.\]
Plugging in these calculations, then,
\[ \EE{\exp\left\{  \frac{\lambda}{1+\eps_n}\sum_{i=1}^n w_i (X_i - \bar{X})- \frac{\lambda^2}{2(1-\frac{2|\lambda|}{3}\|w\|_\infty)} \left( \frac{ \|w\|^2_2 \sigma^2_X}{1+\eps_n} + \frac{\eps_n \cdot  4\|w\|^2_2}{1+\eps_n} \right)\right\}}\leq 1.\]
Replacing $\lambda$ with $\lambda(1+\eps_n)$ (and assuming now $|\lambda|<\frac{3}{2\|w\|_\infty(1+\eps_n)}$) completes the proof 
 of the MGF bound.

Finally, we need to verify the tail bound.
First, observe that $X=(X_1,\dots,X_N)$
is exchangeable conditional on $\tilde{\sigma}^2_{X,N}$ (because $\tilde{\sigma}^2_{X,N}$ is a symmetric function of $X$). 
Applying the MGF bound derived above to this \emph{conditional} distribution yields
\[ \EEst{\exp\left\{  \lambda\sum_{i=1}^n w_i (X_i - \bar{X})\right\}}{\tilde{\sigma}^2_{X,N}} \leq \exp\left\{ \frac{\lambda^2(1+\eps_N)}{2(1-\frac{2|\lambda|}{3}\|w\|_\infty(1+\eps_N))}\cdot \tilde\sigma^2_{X,N} \|w\|^2_2\right\}\]
for any $|\lambda|< \frac{3}{2\|w\|_\infty(1+\eps_N)}$.
We will next need apply the following standard fact
 \cite[Section 2.4]{boucheron2013concentration}:
for any random variable $Z\in\R$ and $a,b>0$,
\begin{multline}\label{eqn:Bernstein_from_MGF_to_tail}
\textnormal{If $\EE{e^{\lambda Z}}\leq e^{\frac{\lambda^2 a^2}{2(1-b|\lambda|)}}$ for all $|\lambda| < 1/b$,}\\\textnormal{then 
$\PP{Z\geq a\sqrt{2\log(1/\delta)}+b\log(1/\delta)}\leq \delta$ for all $\delta\in(0,1)$.}\end{multline}
Applying~\eqref{eqn:Bernstein_from_MGF_to_tail} with $a=\tilde\sigma_{X,N} \|w\|_2\sqrt{1+\eps_N}$ and $b=\frac{2}{3}\|w\|_\infty(1+\eps_N)$, then,
\[ \PPst{\sum_{i=1}^n w_i (X_i - \bar{X}) \geq \tilde\sigma_{X,N} \|w\|_2 \sqrt{2(1+\eps_N) \log(1/\delta)} + \frac{2}{3}\|w\|_\infty (1+\eps_N) \log(1/\delta) }{\tilde{\sigma}^2_{X,N}} \leq \delta.\]
After marginalizing over $\tilde\sigma^2_{X,N}$, this yields the desired bound.
\end{proof}

 \subsection*{Acknowledgements}
The author was
supported by the Office of Naval Research via grant N00014-20-1-2337 and by the National Science
Foundation via grant DMS-2023109.
 The author thanks Aaditya Ramdas and Ian Waudby-Smith for helpful discussions. 
 
\bibliographystyle{plainnat}
\bibliography{bib}
\end{document}